\def\namedlabel#1#2{\begingroup
    #2%
    \def\@currentlabel{#2}%
    \phantomsection\label{#1}\endgroup
}
\newcommand\quotient[2]{
	\mathchoice
	{
		\text{\raise1ex\hbox{$#1$}\Big/\lower1ex\hbox{$#2$}}%
	}
	{
		#1\,/\,#2
	}
	{
		#1\,/\,#2
	}
	{
		#1\,/\,#2
	}
}
\newtheorem{theorem}{Theorem}[section]
\newtheorem{lemma}[theorem]{Lemma}
\newtheorem{definition}[theorem]{Definition}
\newtheorem{example}[theorem]{Example}
\newtheorem{proposition}[theorem]{Proposition}
\newtheorem{remark}[theorem]{Remark}
\newtheorem{corollary}[theorem]{Corollary}
\newtheorem{conjecture}[theorem]{Conjecture}
\newtheorem*{namedtheorem}{\theoremname}
\newcommand{\theoremname}{testing}
\newenvironment{named}[1]{\renewcommand{\theoremname}{#1}\begin{namedtheorem}}{\end{namedtheorem}}
\title{Chebyshev polynomials and Gram determinants from the M\"obius band}
\author{Anthony Christiana}
\address{Department of Mathematics, The George Washington University, Washington DC, USA}
\email{{\rm \textcolor{blue}{ajchristiana@gwmail.gwu.edu}}}
\author{Dionne Ibarra}
\address{School of Mathematics, Monash University, VIC 3800, Australia.}
\email{{\rm \textcolor{blue}{dionne.ibarra@monash.edu}}}
\author{Gabriel Montoya-Vega}
\address{Department of Mathematics, The Graduate Center CUNY, NY, USA, and \newline \indent Department of Mathematics, University of Puerto Rico-R\'io Piedras, San Juan, PR}
\email{{\rm \textcolor{blue}{gabrielmontoyavega@gmail.com}}}
\subjclass[2020]{Primary: 57K10. Secondary: 57K31.}
\keywords{Gram determinants, Mersenne numbers, knot theory, Chebyshev polynomials, knots and links, unorientable surfaces.}
\begin{document}
\begin{abstract}
This article explores the connection between Chebyshev polynomials and knot theory, specifically in relation to Gram determinants. We reveal intriguing formulae involving the Chebyshev polynomial of the first and second kind. In particular we show that for Mersenne numbers, $M_k=2^k-1$ where $k\geq 2$, the $M_k$-th Chebyshev polynomial of the second kind is the product of Chebyshev polynomials of the first kind. We then discuss the Gram determinant of type $(Mb)_1$, restate the conjecture of its closed formula in terms of mostly products of Chebyshev polynomials of the second kind, and prove a factor of the determinant that supports the conjecture. We also showcase an algorithm for calculating the Gram determinant's corresponding matrix. Furthermore, we restate Qi Chen's conjectured closed formula for the Gram determinant of type Mb and discuss future directions.
\end{abstract}

\maketitle

\tableofcontents

\section{Introduction}

Chebyshev polynomials, named after Pafnuty Lvovich Chebyshev, play an important role in modern knot theory. Shortly after the discovery of the Jones polynomial and its generalizations, J\'ozef H. Przytycki made an intriguing observation regarding the behavior of the polynomial when a link is altered by $t_k$-moves \cite{Prz}. The observation revealed that the Jones polynomial exhibited distinctive patterns under such modifications, offering new insights into its structural properties. Over time, this phenomenon was more precisely understood through the use of Chebyshev polynomials. They additionally appear in relation to the Jones-Wenzl idempotents and in Lickorish's construction of the Witten-Reshetikhin-Turaev $3$-manifold invariants \cite{Lic, RT, MH, KL}. Furthermore, Chebyshev polynomials play an important role in expressing the structure of skein modules of $3$-manifolds and skein algebras of trivial I-bundles over surfaces, see \cite{FG}. In this paper, we reveal interesting new relations between Mersenne numbers and the Chebyshev polynomials of the first and second kind. In particular,

\begin{named}{Corollary \ref{Coro:newrelation}}
     Let $M_k=2^k-1$ be the $k$-th Mersenne number.\footnote{We are not requiring $k$ to be prime.} For $k\geq 2$,
    $$ S_{M_k}(d) = \prod_{i=0}^{k-1} T_{2^i}(d).$$
\end{named} 

\ 

The development of the notion of Gram determinants in knot theory, is rooted in Edward Witten's speculation regarding the existence of a $3$-manifold invariant, associated with the Jones polynomial. Specifically, the construction of such an invariant by Nicolai Reshetikhin and Vladimir Turaev sparked a series of subsequent works by knot theorists exploring the connection between these determinants and the mathematical theory of knots. See \cite{IM2, PBIMW} for an introduction to Gram determinants in knot theory. Closed formulae have been proven for many Gram determinants in knot theory, in particular Type $A$, generalized Type $A$, and Type $B$. All of these closed formulae are written in terms of Chebyshev polynomials. In \cite{IM1} a conjectured closed formula for the Gram determinant of Type $(Mb)_1$ was presented and in \cite{IM2} a large portion of the conjectured factors were proven. The remaining factors were proven to be a determinant of a gram matrix denoted by $\tilde{G}_n^{Mb_{n,1}}$. Furthermore, the authors gave a conjectured closed formula for $\det (\tilde{G}_n^{Mb_{n,1}})$. In this paper we use the new relations between the Chebyshev polynomials to restate this conjecture and we prove a factor that supports it.

\begin{named}{Conjecture \ref{conjecturer}}
    For $n\geq 2$,
    $$\det (\tilde{G}_n^{Mb_{n,1}}) = \prod_{k=2}^n (d^2-4)^{\binom{2n}{n-k}}(S_{k-1}(d))^{2 \binom{2n}{n-k}}.$$
\end{named} 

\begin{named}{Theorem \ref{maintheorem}}
    For $n\geq 2$, $\det(\tilde{G}_{n}^{Mb_{n, 1}})$ is divisible by
    $S_1(d)^{2k}$,  where $k = \binom{2n}{n-2}$.
\end{named}
\ 

This article is organized as follows. Section \ref{DefisChebyPolys} presents the definition of the Chebyshev polynomials and some interesting properties. Then in Section \ref{GramMatrixDefi} the Gram determinant of type $(Mb)_1$ and a related Gram determinant together with its conjectured closed formula are introduced. A conjectured factor of this new Gram determinant is also proven in this section. In Section \ref{algorithm}, a novel program for the calculation of its matrix is showcased. Finally, Section \ref{FutDirec} comments on some possibilities for further research regarding the proof of a conjecture concerning the structure of the Gram determinant of type $Mb$.
\section{Chebyshev polynomials}\label{DefisChebyPolys}

In this section we present the definitions of Chebyshev polynomials following \cite{PBIMW}.

\begin{definition}\label{DefiChebyFirst}
  The Chebyshev polynomial of the first kind, $T_n(d)$, is defined by the initial conditions $T_0(d) = 2$, $T_1(d) = d$, and the recursive relation $T_n(d) = dT_{n-1}(d) - T_{n-2} (d)$. 
\end{definition}  

Observe that using the same recursive relation, written in the form $dT_n(d)=T_{n-1}(d)+T_{n+1}(d)$ we can extend the Chebyshev polynomial $T_n(d)$ to negative indexes and obtain $T_{-n}(d)= T_{n}(d)$.

\begin{definition}\label{DefiChebySecond}
  We define the Chebyshev polynomial of the second kind $S_n(d)$ by the initial conditions $S_0 (d) = 1$, $S_1 (d) = d$, and the recursive relation $S_n (d) = dS_{n-1}(d) - S_{n-2} (d)$.
  \end{definition}

$S_n(d)$ can also be extended to negative indexes and we get $S_{-n}(d)=-S_{n-1}(d)$ with $S_{-1}=0$.

The Chebyshev polynomials have a nice well-known product to sum formula:
\begin{eqnarray}
   T_m(d)T_n(d) &=& T_{m+n}(d) + T_{|m-n|}(d) \\
    S_n(d) S_m(d) &=& S_{|n-m|}(d)+ S_{|n-m|+2}(d)+ \dots + S_{n+m}(d). \label{equation:cheby2}
\end{eqnarray}

From Equation \ref{equation:cheby2}, we have 
\begin{equation} \label{equation:cheby2niceeq}
    S_{n}(d)S_m(d) = S_{m-1}(d)S_{n-1}(d)+S_{n+m}(d).
\end{equation}

\begin{lemma}\label{Lemma:chebyshev} For $n\geq 1$,
    $T_{4n}(d) -2 =  (T_n(d))^2((T_n(d))^2-4)$
    and 
    $ T_{2n}(d) -2 = ((T_n(d))^2-4).$
    In particular, the constant coefficient of $T_{4n}(d) -2$ is zero.  
\end{lemma}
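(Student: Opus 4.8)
The plan is to reduce everything to the product-to-sum formula $T_m(d)T_n(d) = T_{m+n}(d) + T_{|m-n|}(d)$ recorded above, specialized to $m=n$. Since $T_0(d)=2$, this immediately gives the basic squaring identity
$$(T_n(d))^2 = T_{2n}(d) + 2,$$
valid for all $n\geq 1$ (indeed for all $n$, using $T_{-n}=T_n$). The second claimed identity is then just a rearrangement: subtracting $4$ from both sides yields $T_{2n}(d) - 2 = (T_n(d))^2 - 4$.

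For the first identity I would apply the squaring identity twice. Replacing $n$ by $2n$ gives $(T_{2n}(d))^2 = T_{4n}(d) + 2$, hence
$$T_{4n}(d) - 2 = (T_{2n}(d))^2 - 4 = (T_{2n}(d) - 2)(T_{2n}(d) + 2).$$
Now I substitute the two expressions coming from the squaring identity applied to $n$, namely $T_{2n}(d) - 2 = (T_n(d))^2 - 4$ and $T_{2n}(d) + 2 = (T_n(d))^2$. This produces $T_{4n}(d) - 2 = ((T_n(d))^2 - 4)(T_n(d))^2$, which is the claim.

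For the statement about the constant coefficient, I would evaluate at $d=0$, since the constant coefficient of a polynomial $P(d)$ equals $P(0)$. From the recursion $T_n(d) = dT_{n-1}(d) - T_{n-2}(d)$ at $d=0$ one gets $T_n(0) = -T_{n-2}(0)$, so $n \mapsto T_n(0)$ is periodic with period dividing $4$; together with $T_0(0)=2$ this gives $T_{4n}(0)=2$, so the constant coefficient of $T_{4n}(d) - 2$ is $0$. Alternatively this can be read off the factorization just proven: $T_n(0)\in\{0,\pm 2\}$, so either $(T_n(0))^2=0$ or $(T_n(0))^2-4=0$, and in both cases the product vanishes.

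I do not anticipate a genuine obstacle here; the only real ingredient is the product-to-sum formula, and the rest is elementary algebra. The one point to keep in mind is that the normalization $T_0(d)=2$ (rather than $1$) is precisely what makes $(T_n)^2 = T_{2n}+2$ come out with the constant $2$, which is what drives both identities. As a sanity check one can also run the substitution $d = q + q^{-1}$, under which $T_n(d) = q^n + q^{-n}$ and all of the above identities become transparent telescoping computations with $q$.
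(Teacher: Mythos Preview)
Your proof is correct and follows essentially the same approach as the paper: both derive $T_{2n}(d)=(T_n(d))^2-2$ from the product-to-sum formula with $m=n$, then iterate to obtain $T_{4n}(d)-2=(T_{2n}(d))^2-4$ and substitute. Your additional justification of the constant-coefficient claim (which the paper leaves implicit) is a nice bonus.
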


\begin{proof}
    By the product to sum formula, $T_a(d)T_b(d)=T_{a+b}(d)+T_{|a-b|}(d)$, we immediately have
    $$T_{2n}(d) = (T_n(d))^2-2.$$
    Therefore, 
\begin{eqnarray*}
    T_{4n}(d)-2 &=& (T_{2n}(d))^2-4 \\
    &=& ((T_n(d))^2-2)^2-4  \\
    &=& (T_n(d))^2((T_n(d))^2-4).
    \end{eqnarray*}
\end{proof}

A direct application of Lemma \ref{Lemma:chebyshev} $(n-1)$-times yields the following. 
\begin{corollary}\label{coro:chebyT}
    For $n>1$
$$ (T_{2^n}(d))^2-4 = ((T_{2}(d))^2-4) \prod_{i=1}^{n-1} (T_{2^i}(d))^2$$
and 
$$T_{2^n}-2 = ((T_{1}(d))^2-4) \prod_{i=0}^{n-2} (T_{2^i}(d))^2.$$
\end{corollary}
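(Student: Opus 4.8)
The plan is to prove both displayed identities together by induction on $n$, with Lemma~\ref{Lemma:chebyshev} as the only ingredient. The first thing I would record is that the two identities are equivalent. Applying the second identity of Lemma~\ref{Lemma:chebyshev} with $n$ replaced by $2^n$ gives $(T_{2^n}(d))^2 - 4 = T_{2^{n+1}}(d) - 2$, so the first displayed formula turns into
$$T_{2^{n+1}}(d) - 2 = ((T_2(d))^2-4)\prod_{i=1}^{n-1}(T_{2^i}(d))^2.$$
On the other hand, the first identity of Lemma~\ref{Lemma:chebyshev} with $n=1$ together with its second identity with $n=2$ gives $(T_2(d))^2 - 4 = (T_1(d))^2((T_1(d))^2-4)$, and splitting off the $i=0$ factor from the product in the second displayed formula shows that that formula, stated for $n+1$, reads exactly the same way. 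Hence it suffices to prove one of them, say
$$T_{2^n}(d)-2 = ((T_1(d))^2-4)\prod_{i=0}^{n-2}(T_{2^i}(d))^2, \qquad n\ge 2.$$

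For the base case $n=2$ this is $T_4(d)-2 = ((T_1(d))^2-4)(T_1(d))^2$, which is precisely the first identity of Lemma~\ref{Lemma:chebyshev} with $n=1$. For the inductive step, I would assume the formula for some $n\ge 2$, apply the first identity of Lemma~\ref{Lemma:chebyshev} with $n$ replaced by $2^{n-1}$ to obtain
$$T_{2^{n+1}}(d)-2 = (T_{2^{n-1}}(d))^2\bigl((T_{2^{n-1}}(d))^2-4\bigr),$$
rewrite the second factor via the lemma's second identity as $(T_{2^{n-1}}(d))^2-4 = T_{2^n}(d)-2$, and then substitute the inductive hypothesis for $T_{2^n}(d)-2$. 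Absorbing the extra factor $(T_{2^{n-1}}(d))^2$ into the product upgrades $\prod_{i=0}^{n-2}$ to $\prod_{i=0}^{n-1}$, which is the statement for $n+1$.

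I do not anticipate a real obstacle: the argument is just a telescoping of the two recursions in Lemma~\ref{Lemma:chebyshev}, and the phrase in the statement about applying the lemma $(n-1)$ times is exactly this induction. The only point that needs attention is the bookkeeping of the product ranges — keeping straight whether a product begins at $i=0$ or $i=1$ — and aligning each base case with the correct instance of the lemma.
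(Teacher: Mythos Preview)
Your proof is correct and matches the paper's approach: the paper simply says the corollary follows by applying Lemma~\ref{Lemma:chebyshev} $(n-1)$ times, and your induction is precisely the telescoping that makes this explicit. The extra observation that the two displayed identities are equivalent (shifted by one in $n$) is a nice touch that the paper leaves implicit.
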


\begin{lemma}\label{lemma:restatement2}
    For $n>1$,
$$ (T_n(d))^2-4 = (d^2-4)(S_{n-1}(d))^2.$$
\end{lemma}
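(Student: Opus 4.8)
The plan is to verify the identity $(T_n(d))^2 - 4 = (d^2-4)(S_{n-1}(d))^2$ for $n > 1$. There are at least two natural routes: a trigonometric substitution and an induction using the product-to-sum formulae. I would lead with the trigonometric approach since it is cleanest. Write $d = 2\cos\theta$. Then the standard identities (which follow from Definitions \ref{DefiChebyFirst} and \ref{DefiChebySecond} together with the recursions) give $T_n(2\cos\theta) = 2\cos(n\theta)$ and $S_{n-1}(2\cos\theta) = \sin(n\theta)/\sin\theta$. Substituting, the right-hand side becomes $(4\cos^2\theta - 4)\cdot \sin^2(n\theta)/\sin^2\theta = -4\sin^2\theta \cdot \sin^2(n\theta)/\sin^2\theta = -4\sin^2(n\theta)$, while the left-hand side is $4\cos^2(n\theta) - 4 = -4\sin^2(n\theta)$. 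Since these two polynomial expressions in $d$ agree for all $d \in [-2,2]$ (infinitely many values), they are equal as polynomials.

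An alternative, purely algebraic argument proceeds by induction on $n$, using the identity $T_n(d) = dS_{n-1}(d) - 2S_{n-2}(d)$ (equivalently $T_n = S_n - S_{n-2}$), which is itself provable by a quick induction from the matching recursions. Then
\begin{align*}
(T_n(d))^2 - 4 &= (S_n(d) - S_{n-2}(d))^2 - 4 \\
&= (S_n(d))^2 - 2S_n(d)S_{n-2}(d) + (S_{n-2}(d))^2 - 4.
\end{align*}
Applying Equation \ref{equation:cheby2} to expand $S_n(d)S_{n-2}(d)$ and $(S_n(d))^2$, $(S_{n-2}(d))^2$ in terms of $S_j(d)$'s, one can collapse the sum; alternatively, use Equation \ref{equation:cheby2niceeq} twice to reduce everything to lower-index products and match against $(d^2-4)(S_{n-1}(d))^2 = (S_2(d) - 1)(S_{n-1}(d))^2$ expanded the same way. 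This is more bookkeeping but avoids any appeal to trigonometric identities.

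The main obstacle, such as it is, lies in justifying the trigonometric identities for $T_n$ and $S_n$ if the paper wants a self-contained argument — though these are classical and follow immediately from the recursions by induction, so I would simply cite them as well-known (the paper already invokes the product-to-sum formulae in the same spirit). If a fully algebraic proof is preferred, the delicate point is the careful telescoping of the Chebyshev sums in Equation \ref{equation:cheby2}; keeping track of the index ranges and the parity of $n$ is where a sign or off-by-one error could creep in. I expect the trigonometric proof to be the one worth writing, with perhaps a remark that it also follows from Corollary \ref{coro:chebyT} and Lemma \ref{Lemma:chebyshev} in the special case $n$ a power of two, and from Equation \ref{equation:cheby2niceeq} in general.
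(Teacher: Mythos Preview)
Your proposal is correct, and both routes you sketch are valid; the trigonometric one in particular is complete as written. However, the paper takes a genuinely different path. It starts from the cited identity $(T_n(d))^2 - d^2 = S_n(d)S_{n-2}(d)(d^2-4)$ (from \cite{PBIMW}), rewrites the left side as $(T_n(d))^2 - 4 = (d^2-4)\bigl(S_n(d)S_{n-2}(d) + S_0(d)\bigr)$, and then uses the product-to-sum formula twice to show $S_n S_{n-2} + S_0 = S_n^2 - S_{2n} = S_{n-1}^2$, the last step being exactly Equation~\ref{equation:cheby2niceeq} with $m=n$. Your trigonometric substitution is shorter, more self-contained, and avoids importing the auxiliary relation from \cite{PBIMW}; its only cost is the standard (but external) fact that $T_n(2\cos\theta)=2\cos n\theta$ and $S_{n-1}(2\cos\theta)=\sin n\theta/\sin\theta$. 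The paper's argument, by contrast, stays entirely within the algebraic framework of the product-to-sum identities already set up in Section~\ref{DefisChebyPolys}, which keeps the exposition uniform but makes the proof noticeably longer. Your second, algebraic sketch via $T_n = S_n - S_{n-2}$ is closer in spirit to the paper's method but still begins from a different relation between $T_n$ and the $S_j$.
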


\begin{proof} 
From Chapter 6 of \cite{PBIMW} we know the following formula that relates the Chebyshev polynomials of the first and second kind:
$$(T_n(d))^2-d^2=S_n(d)S_{n-2}(d)(d^2-4).$$ Then, 

\begin{eqnarray*}
    (T_n(d))^2-4  &=& S_n(d)S_{n-2}(d)(d^2-4)+d^2-4 \\
    &=& (d^2-4)(S_n(d)S_{n-2}(d)+1)\\
    &=& (d^2-4)(S_n(d)S_{n-2}(d)+S_0(d)) \hspace{3cm} (\ast).
\end{eqnarray*}

Observe that by the product-to-sum formula, we have:
\begin{eqnarray*}
    S_n(d)S_{n-2}(d) &=&S_{|n-2-n|}(d)+S_{|n-2-n|+2}(d)+\cdots+S_{n+n-2}(d)\\
    &=& S_{2}(d)+S_{4}(d)+\cdots+S_{2n-2}(d).
\end{eqnarray*}

Thus, 
\begin{eqnarray*}
    S_n(d)S_{n-2}(d)+S_0(d) &=& S_0(d)+S_2(d)+\cdots+S_{2n-2}(d)\\
    &=&  S_n(d)S_n(d)-S_{2n}(d).
\end{eqnarray*}

Then Equation $(\ast)$ above becomes:
$$(T_n(d))^2-4=(d^2-4)(S_n(d)S_n(d)-S_{2n}(d))=(d^2-4)(S^2_n(d)-S_{2n}(d)) \hspace{3cm} (\ast\ast).$$

Observe that using the product-to-sum formulas:
$$S_m(d)S_n(d)=S_{|n-m|}(d)+S_{|n-m|+2}(d)+\cdots+S_{n+m}(d)$$
and
$$S_{m-1}(d)S_{n-1}(d)=S_{|n-m|}(d)+S_{|n-m|+2}(d)+\cdots+S_{n+m-2}(d),$$
we obtain the following equation
$$S_m(d)S_n(d)=S_{m-1}(d)S_{n-1}(d)+S_{n+m}(d),$$
which we can use to rewrite $(S_n(d))^2$ as:
$$(S_n(d))^2=S_{n-1}(d)S_{n-1}(d)+S_{2n}(d),$$
which implies that $(S_n(d))^2-S_{2n}(d)=(S_{n-1}(d))^2$.
Thus, Equation $(\ast\ast)$ becomes
$$(T_n(d))^2-4=(d^2-4)(S_{n-1}(d))^2.$$
\end{proof}

As a consequence to Lemmas \ref{Lemma:chebyshev} 
 and \ref{lemma:restatement2}, we have that the $M_k$-th Chebyshev polynomial of the second kind is the product of Chebyshev polynomials of the first kind, where $M_k = 2^k-1$ is the $k$-th Mersenne number. 

\begin{corollary}\label{Coro:newrelation}  Let $M_k=2^k-1$ be the $k$-th Mersenne number. For $k\geq 2$,
    $$ S_{M_k}(d) = \prod_{i=0}^{k-1} T_{2^i}(d).$$
\end{corollary}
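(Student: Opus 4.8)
The plan is to prove the identity $S_{M_k}(d) = \prod_{i=0}^{k-1} T_{2^i}(d)$ by induction on $k$, using Corollary \ref{coro:chebyT} and Lemma \ref{lemma:restatement2} as the engine. The base case $k=2$ asks that $S_3(d) = T_1(d)T_2(d) = d(d^2-2) = d^3-2d$, which matches the standard value $S_3(d) = d^3 - 2d$ directly from the recursion in Definition \ref{DefiChebySecond}. For the inductive step, I would combine two facts: first, that $M_{k+1} = 2^{k+1}-1 = 2M_k + 1$, so the index essentially doubles; and second, the product-to-sum relation. The cleanest route is probably to relate $S_{2m+1}(d)$ to $S_m(d)$ and $T_{m+1}(d)$ (or $T_m(d)$) and then bootstrap.

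Concretely, I would first establish (or invoke) the auxiliary identity $S_{2m+1}(d) = S_m(d)\,T_{m+1}(d)$, which can be extracted from the product-to-sum formula: since $T_{m+1}(d)S_m(d) = S_{2m+1}(d) + S_{1}(d)\cdot(\text{lower terms})$... more carefully, using $T_a(d)S_b(d) = S_{a+b}(d) + S_{b-a}(d)$ (valid with the negative-index convention $S_{-1}=0$, $S_{-n}=-S_{n-1}$) one gets $T_{m+1}(d)S_m(d) = S_{2m+1}(d) + S_{-1}(d) = S_{2m+1}(d)$. Applying this with $m = M_k$ gives $S_{M_{k+1}}(d) = S_{2M_k+1}(d) = S_{M_k}(d)\,T_{M_k+1}(d) = S_{M_k}(d)\,T_{2^k}(d)$. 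By the inductive hypothesis $S_{M_k}(d) = \prod_{i=0}^{k-1}T_{2^i}(d)$, hence $S_{M_{k+1}}(d) = \prod_{i=0}^{k}T_{2^i}(d)$, completing the induction.

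Alternatively — and this is likely the route the authors intend given the lemmas they prove — one can avoid the induction entirely by a telescoping argument: Lemma \ref{lemma:restatement2} gives $(T_{2^k}(d))^2 - 4 = (d^2-4)(S_{2^k-1}(d))^2 = (d^2-4)(S_{M_k}(d))^2$, while Corollary \ref{coro:chebyT} gives $(T_{2^k}(d))^2 - 4 = ((T_2(d))^2-4)\prod_{i=1}^{k-1}(T_{2^i}(d))^2$. Applying Lemma \ref{lemma:restatement2} once more to $(T_2(d))^2 - 4 = (d^2-4)(S_1(d))^2 = (d^2-4)(T_1(d))^2$ (using $S_1 = T_1 = d$), we obtain $(d^2-4)(S_{M_k}(d))^2 = (d^2-4)\prod_{i=0}^{k-1}(T_{2^i}(d))^2$. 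Cancelling $d^2-4$ (legitimate in the polynomial ring since $d^2-4 \ne 0$) yields $(S_{M_k}(d))^2 = \prod_{i=0}^{k-1}(T_{2^i}(d))^2$, so $S_{M_k}(d) = \pm\prod_{i=0}^{k-1}T_{2^i}(d)$.

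The main obstacle is pinning down the sign in this second approach: squaring loses it, so one must argue the correct branch is $+$. The easiest fix is to compare leading coefficients — both sides are monic of degree $M_k = 2^k-1$ (the left since $S_n$ is monic of degree $n$; the right since each $T_{2^i}$ is monic of degree $2^i$ and $\sum_{i=0}^{k-1}2^i = 2^k-1$) — so the sign must be $+$. Alternatively one evaluates at a convenient point, but the leading-coefficient check is cleanest and I would present that. So the write-up is: invoke Lemma \ref{lemma:restatement2} and Corollary \ref{coro:chebyT}, cancel $d^2-4$, take the positive square root justified by comparing leading coefficients, and conclude.
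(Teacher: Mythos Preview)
Your second (``telescoping'') approach is precisely the paper's proof: it equates $(T_{2^k}(d))^2-4$ as computed via Lemma~\ref{lemma:restatement2} with the same quantity computed via Lemma~\ref{Lemma:chebyshev} and Corollary~\ref{coro:chebyT}, though the paper is terser and leaves both the cancellation of $d^2-4$ and the sign determination implicit --- your leading-coefficient check is a welcome addition that the paper omits.

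Your first approach, by induction via the identity $T_{m+1}(d)\,S_m(d)=S_{2m+1}(d)$ (an instance of the mixed product-to-sum formula $T_a S_b = S_{a+b}+S_{b-a}$ with $S_{-1}=0$), is a genuinely different and arguably cleaner route: it never squares anything, so the sign ambiguity never arises, and it bypasses Corollary~\ref{coro:chebyT} entirely in favor of a single recursion $S_{M_{k+1}}(d)=T_{2^k}(d)\,S_{M_k}(d)$. The paper's route has the virtue of making Corollary~\ref{Coro:newrelation} an immediate consequence of lemmas already on hand; yours is more self-contained and would stand even without those preliminaries.
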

\begin{proof}
    By Lemma \ref{lemma:restatement2}, 
    $$ (T_{2^k}(d))^2-4 = (d^2-4)(S_{2^k-1}(d))^2.$$
    By Lemma \ref{Lemma:chebyshev} and Corollary \ref{coro:chebyT}, 
    $$(T_{2^k}(d))^2-4 = T_{2^{k+1}}-2 = (d^2-4) \prod_{i=0}^{k-1} (T_{2^i}(d))^2.$$
\end{proof}
\section{The Gram determinant of type $(Mb)_1$}\label{GramMatrixDefi}

Przytycki introduced a novel concept known as the Gram determinant of type $Mb$ in 2008. This idea was developed as part of investigating all crossingless connections on a M\"obius band between $2n$ points on the boundary. To define this Gram determinant, he worked with a bilinear form constructed by identifying two Möbius bands along their boundaries \cite{BIMP}. The Gram determinant of type $(\mathit{Mb})_1$  was introduced in \cite{IM1} by using a subset of all crossingless connections on a M\"obius band between $2n$ points on the boundary that are related to crossingless connections in an annulus with $2n$ points on the outer boundary and either zero or two points on the inner boundary.

\begin{definition}\label{TypeMB1} 
Let $(\mathit{Mb}_{n})_1 = \mathit{Mb}_{n,0} \cup \mathit{Mb}_{n,1}$ where $\mathit{Mb}_{n,0} = \{ m_1, \dots, m_{\binom{2n}{n}}\}$ is the set of all diagrams of crossingless connections between $2n$ marked points on the boundary of $\mathit{Mb} \ \hat{\times} \ \{0\}$ whose arcs do not intersect the crosscap and $\mathit{Mb}_{n,1} = \{ m_1, \dots, m_{\binom{2n}{n-1}}\}$ is the set of all diagrams of crossingless connections between $2n$ marked points on the boundary of  $\mathit{Mb} \ \hat{\times} \ \{0\}$ with exactly one curve intersecting the crosscap. 
Define a bilinear form $\langle \ , \ \rangle_{\mathit{Mb}}$ on the elements of $(\mathit{Mb}_{n})_1$ by using the same bilinear form as type $\mathit{Mb}$, as follows: $$ \langle \ , \ \rangle_{\mathit{Mb}} : \mathcal{S}_{2,\infty}(\mathit{Mb}\  \hat \times \ I, \{x_i\}_1^{2n}) \times \mathcal{S}_{2,\infty}(\mathit{Mb}\ \hat \times \  I, \{x_i\}_1^{2n}) \longrightarrow \mathbb{Z}[d,w,x,y,z].$$

Given $m_i, m_j \in (\mathit{Mb}_{n})_1$, identify the boundary component of $m_i$ with that of the inversion of $m_j$, respecting the labels of the marked points. The result is an element in $Kb \ \hat{\times} \ I$ containing only disjoint simple closed curves. Then $\langle m_i , m_j\rangle_{\mathit{Mb}} :=  d^mx^ny^kz^lw^h$ where $m,n,k,l$ and $h$ denote the number of these curves, respectively.

The Gram matrix of type $(\mathit{Mb})_1$ is defined as $\ G_n^{(\mathit{Mb})_1} = (\langle m_i , m_j\rangle_{\mathit{Mb}})_{1 \leq i, j \leq \binom{2n}{n-1} +\binom{2n}{n}}$ and its determinant $D_n^{(\mathit{Mb})_1}$ is called the Gram determinant of type $(\mathit{Mb})_1$.
\end{definition}

In \cite{IM2}, the authors proved a large portion of the conjectured factors of $\det(G_{n}^{(Mb)_1})$, then they related the remaining factors as the determinant of a new Gram matrix, denoted by $\tilde{G}_{n}^{Mb_{n, 1}}$, defined below. 

\begin{definition}
    Consider the set $Mb_{n,1}$ containing the collection of all diagrams of crossingless connections between $2n$ marked points on the boundary of $Mb \hat{\times} \{0\}$ with exactly one curve intersecting the crosscap. Let $G_{n}^{Mb_{n,1}}$ be the Gram matrix defined on the set $Mb_{n,1}$ using the bilinear form $\langle \ , \ \rangle_{Mb}$,

    $$ G_{n}^{Mb_{n, 1}} = (\langle m_i, m_j \rangle_{Mb})_{1 \leq i,j \leq \binom{2n}{n-1}},$$
    where $m_i, m_j \in Mb_{n,1}$. Denote by $\tilde{G}_{n}^{Mb_{n, 1}}$ the Gram matrix obtained from substituting $y=0$ and $w=1$ into $G_{n}^{Mb_{n, 1}}$,
    $$\tilde{G}_{n}^{Mb_{n, 1}} = G_{n}^{Mb_{n, 1}}(y=0, w=1, d, z, x).$$
\end{definition}

In \cite{IM2} the following conjecture  was presented. Furthermore, the authors proved that $\det(\tilde{G}_{n}^{Mb_{n, 1}}) $ divides $\det(G_n^{(Mb)_1})$.

\begin{conjecture}\label{newconjecture}\cite{IM2} For $n \geq 2$,
    $$\det(\tilde{G}_{n}^{Mb_{n, 1}}) = \prod\limits_{k=2}^n (T_{2k}(d)-2)^{\binom{2n}{n-k}}.$$
\end{conjecture}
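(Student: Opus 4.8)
The plan is to prove the identity by \emph{diagonalizing} the form $\langle\,,\,\rangle_{Mb}$ rather than by manipulating the $\binom{2n}{n-1}\times\binom{2n}{n-1}$ matrix directly, using the correspondence the paper sets up between $Mb_{n,1}$ and annular diagrams. Since each element of $Mb_{n,1}$ has exactly one arc through the crosscap, cutting along the crosscap turns it into a crossingless connection in the annulus with $2n$ points on the outer boundary and two points on the inner boundary. I would grade the resulting free module by the number $k$ of essential arcs, i.e.\ arcs that wind around the core and are forced past the inner boundary, and show that this grading is \emph{orthogonal} for $\langle\,,\,\rangle_{Mb}$: gluing two configurations with different winding numbers can only create inessential curves after the crosscap relation is applied, so distinct sectors do not pair. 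Granting orthogonality, $\tilde G_n^{Mb_{n,1}}$ is block-diagonal and its determinant splits as a product over $k$ of the determinants of the winding-$k$ blocks.

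The next step is to evaluate each block. I expect the form to act on the winding-$k$ sector as a scalar eigenvalue times a \emph{unimodular} change-of-basis matrix: the planar matchings underlying a fixed winding are orthonormal once the essential part is capped off, while all of the $x$- and $z$-dependence is pushed into the unimodular factor. This is exactly what forces the determinant to be a polynomial in $d$ alone, despite the entries of $\tilde G_n^{Mb_{n,1}}$ involving $x$ and $z$. The scalar should be $T_{2k}(d)-2$: gluing a winding-$k$ configuration to its inversion doubles the M\"obius band to the Klein bottle and doubles the winding to $2k$, and resolving a $2k$-fold essential curve in the Chebyshev basis returns $T_{2k}(d)$ with a $-2$ correction from the two boundary circles. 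The identities of Section \ref{DefisChebyPolys} make the target value transparent, since $T_{2k}(d)-2=(T_k(d))^2-4=(d^2-4)(S_{k-1}(d))^2$ by Lemma \ref{Lemma:chebyshev} and Lemma \ref{lemma:restatement2}; the square $(S_{k-1}(d))^2$ is precisely the shape produced by pairing an essential arc with itself, and the factor $d^2-4$ records the two boundary circles. The multiplicity of this eigenvalue should be the dimension $\binom{2n}{n-k}$ of the winding-$k$ annular sector, which in the annular (rather than planar) setting is a plain binomial rather than a Catalan number; the product runs over $2\le k\le n$ because, after the substitution $y=0,\ w=1$, the low sectors $k=0$ and $k=1$ contribute only a unit to the determinant.

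As a more robust route, and as an independent check, I would upgrade divisibility to equality by a degree argument anchored on Theorem \ref{maintheorem}. That theorem already exhibits the factor $S_1(d)^{2\binom{2n}{n-2}}=d^{2\binom{2n}{n-2}}$, which is the pure-$S_1$ part of the $k=2$ term $(T_4(d)-2)^{\binom{2n}{n-2}}=(d^2-4)^{\binom{2n}{n-2}}(S_1(d))^{2\binom{2n}{n-2}}$. Applying the same null-vector construction sector by sector --- computing the nullity of $\tilde G_n^{Mb_{n,1}}$ at each root of $T_{2k}(d)-2$ and producing enough explicit kernel elements from annular Jones--Wenzl data at those special values of $d$ --- should show that the full product $\prod_{k=2}^n(T_{2k}(d)-2)^{\binom{2n}{n-k}}$ divides $\det\tilde G_n^{Mb_{n,1}}$. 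Matching the $d$-degree of this product against the $d$-degree of the determinant, read off from the matrix size $\binom{2n}{n-1}$ and the maximal entry degree, then forces equality up to a constant, which is fixed by evaluating both sides at one convenient value of $d$.

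The main obstacle is the eigenvalue computation in the second step: proving that the form is genuinely \emph{scalar} on each winding sector and not merely block-triangular, and that the scalar closes at exactly $T_{2k}(d)-2$. Concretely, one must control how the crosscap relation acts on nested essential arcs and show that the Chebyshev recursion in the number of crosscap crossings does not leak cross-terms between sectors of different winding. The degree-count route avoids computing the eigenvalue but transfers the difficulty to establishing divisibility by \emph{all} the higher factors with $k\ge 3$ --- exactly the range beyond Theorem \ref{maintheorem}, where the shared roots $d=\pm2$ and the overlapping roots of the various $S_{k-1}(d)$ make the nullity bookkeeping delicate. Extending the slicing argument uniformly in $k$ is the crux of the proof.
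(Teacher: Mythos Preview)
The statement you are attempting is a \emph{conjecture} in the paper, not a theorem; the paper gives no proof of it. The only result proved toward it is Theorem~\ref{maintheorem}, namely divisibility of $\det(\tilde G_n^{Mb_{n,1}})$ by $S_1(d)^{2\binom{2n}{n-2}}=d^{2\binom{2n}{n-2}}$, obtained by explicit column operations on certain $4$-element classes. So there is no ``paper's own proof'' to compare your attempt against, and what you have written is a strategy with self-acknowledged gaps, not a proof.

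On the substance of your strategy, the decisive first step --- orthogonality of the winding-$k$ sectors under $\langle\,,\,\rangle_{Mb}$ --- is asserted but not argued, and is very unlikely to hold in the naive diagram basis. Gram forms of annular/type-$B$ Temperley--Lieb type are generally only block-\emph{triangular} with respect to through-strand filtrations; decoupling into genuine blocks requires first passing to a Jones--Wenzl or cellular basis. Your own obstacle paragraph effectively concedes this (``proving that the form is genuinely scalar \ldots\ and not merely block-triangular''), so the opening paragraph overstates what is available. A smaller inaccuracy: the entries of $\tilde G_n^{Mb_{n,1}}$ do not contain $x$. Since each element of $Mb_{n,1}$ has exactly one arc through the crosscap, in any pairing the two crosscap arcs either lie on a single closed curve (a $w$-curve) or on two distinct curves (an $x$-curve and a $y$-curve together); after $y=0$, $w=1$ neither survives. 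So the mechanism you propose for ``killing the $x$-dependence'' is not what is actually happening.

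Your alternative degree-count route is more honest, but as you note it reduces to precisely the open part of the problem: producing, for every $k\ge 3$, enough independent null vectors at the roots of $T_{2k}(d)-2$, while controlling the overlaps at $d=\pm 2$ and at the shared roots of the various $S_{k-1}(d)$. Theorem~\ref{maintheorem} handles only the $S_1$-portion of the $k=2$ term, and your proposal supplies no new mechanism beyond it. The outline is a reasonable program --- and Section~\ref{FutDirec} of the paper points in the same Jones--Wenzl direction --- but it does not close the gap the conjecture leaves open.
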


Lemma \ref{lemma:restatement2} allows us to restate the conjectured closed formula for $\det(G_n^{(Mb)_1})$ and Conjecture \ref{newconjecture} as follows.

\begin{conjecture}\label{Conjecture}\cite{IM1}\

Let $R = \mathbb{Z}[A^{\pm 1},w,x,y,z].$ Then,
the Gram determinant of type $(Mb)_1$ for $n \geq 1$, is:
\begin{eqnarray*}
D^{(Mb)_1}_n &=&  \left[(d-z)((d + z)w -2xy) \right]^{\binom{2n}{n-1}} \prod_{k=2}^n (T_k(d)^2-z^2)^{\binom{2n}{n-k} }\prod_{k=2}^n (d^2-4)^{\binom{2n}{n-k}}(S_{k-1}(d))^{2 \binom{2n}{n-k}}.
\end{eqnarray*}

\end{conjecture}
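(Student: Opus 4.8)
The plan is to split Conjecture~\ref{Conjecture} into the two explicit factors already established in \cite{IM2} and the remaining ``hard'' factor, which is exactly $\det(\tilde{G}_{n}^{Mb_{n,1}})$. By the reduction carried out in \cite{IM2}, one has
$$D^{(Mb)_1}_n = \left[(d-z)((d+z)w-2xy)\right]^{\binom{2n}{n-1}} \prod_{k=2}^n (T_k(d)^2-z^2)^{\binom{2n}{n-k}} \cdot \det(\tilde{G}_{n}^{Mb_{n,1}}),$$
so the first two products reproduce the first two products of Conjecture~\ref{Conjecture} verbatim, and it remains to identify $\det(\tilde{G}_{n}^{Mb_{n,1}})$ with $\prod_{k=2}^n (d^2-4)^{\binom{2n}{n-k}}(S_{k-1}(d))^{2\binom{2n}{n-k}}$. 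By Lemma~\ref{lemma:restatement2} this last product equals $\prod_{k=2}^n (T_k(d)^2-4)^{\binom{2n}{n-k}}$, and by Lemma~\ref{Lemma:chebyshev} it equals $\prod_{k=2}^n (T_{2k}(d)-2)^{\binom{2n}{n-k}}$; hence proving Conjecture~\ref{Conjecture} is equivalent to proving Conjecture~\ref{newconjecture} (equivalently its restatement Conjecture~\ref{conjecturer}). I would therefore devote the bulk of the argument to computing $\det(\tilde{G}_{n}^{Mb_{n,1}})$.

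For that determinant I would use the divisibility-plus-degree strategy that succeeds for the Gram determinants of types $A$ and $B$. The idea is to show that each conjectured factor $(T_{2k}(d)-2)^{\binom{2n}{n-k}} = (T_k(d)^2-4)^{\binom{2n}{n-k}}$ divides $\det(\tilde{G}_{n}^{Mb_{n,1}})$, and then to match total degrees in $d$ together with leading coefficients. Concretely, for a fixed $k$ I would specialize $d$ to a root of $T_{2k}(d)-2$ and exhibit $\binom{2n}{n-k}$ linearly independent null vectors of the specialized matrix; these should come from the through-degree decomposition of the diagram space $Mb_{n,1}$, built from Jones--Wenzl-type idempotents that annihilate strands at the appropriate Chebyshev root. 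Theorem~\ref{maintheorem} is precisely the $k=2$ instance of this scheme, since there $(T_2(d)^2-4)^{\binom{2n}{n-2}} = (d^2-4)^{\binom{2n}{n-2}}S_1(d)^{2\binom{2n}{n-2}}$ and the theorem already supplies the $S_1(d)^{2\binom{2n}{n-2}}$ part; the program is to upgrade this to the full factor and to every $k$.

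A natural companion to the kernel construction is an induction on $n$: adding a cap--cup pair to relate $\tilde{G}_{n}^{Mb_{n,1}}$ to $\tilde{G}_{n-1}^{Mb_{n-1,1}}$ should reproduce the Pascal-type recursion satisfied by the multiplicities $\binom{2n}{n-k}$ and multiply the level-$k$ eigenvalue $T_k(d)^2-4$ in a controlled way, simultaneously organizing the degree count and confirming that no spurious factors appear. Because the distinct irreducible factors of the various $T_{2k}(d)-2$ overlap---most visibly, $d=\pm 2$ are roots of every $T_k(d)^2-4$---the degree and multiplicity bookkeeping must be done at the level of the factorization into irreducibles (equivalently via Corollary~\ref{coro:chebyT} for the powers of two), tracking the exact order of vanishing $\sum_k \binom{2n}{n-k}$ contributed at each shared root rather than factor by factor.

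The main obstacle will be the kernel construction in the presence of the crosscap. Unlike the planar type $A$ case, the single curve through the crosscap in $Mb_{n,1}$ obstructs a direct reduction to the Temperley--Lieb diagonalization: sliding a curve across the crosscap changes the diagram by crosscap relations that mix through-degrees and produce the ``doubling'' responsible for $T_{2k}(d)$ rather than $T_k(d)$. Producing the full set of $\binom{2n}{n-k}$ independent null vectors for every $k$ at once, and proving they exhaust the kernel so that the vanishing orders are \emph{exactly}, not merely at least, $\binom{2n}{n-k}$, is the crux; the substitution $y=0,\,w=1$ defining $\tilde{G}_{n}^{Mb_{n,1}}$ is what makes the resulting eigenvalues collapse to $T_{2k}(d)-2$, and exploiting this simplification to control the crosscap relations is where the decisive work lies.
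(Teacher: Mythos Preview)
The statement you are addressing is a \emph{conjecture}, not a theorem: the paper does not prove it, and explicitly presents it as an open restatement (via Lemma~\ref{lemma:restatement2}) of the conjectured closed formula from \cite{IM1}. So there is no proof in the paper to compare your proposal against. What the paper actually establishes is only (i) the equivalence of Conjecture~\ref{Conjecture} with Conjectures~\ref{newconjecture}/\ref{conjecturer}, which is precisely your first paragraph, and (ii) the single divisibility result of Theorem~\ref{maintheorem}, giving the factor $S_1(d)^{2\binom{2n}{n-2}}$ of $\det(\tilde{G}_n^{Mb_{n,1}})$.

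Your proposal correctly reproduces the reduction step, but the remainder is a research outline rather than a proof: you describe the divisibility-plus-degree strategy and the desired null-vector construction for each $k$, yet you do not carry any of it out, and you yourself identify the crux (producing $\binom{2n}{n-k}$ independent null vectors for every $k$ and showing the vanishing orders are exact) as an unresolved obstacle. This is honest, but it means the proposal does not constitute a proof of the conjecture; it is a plan whose hardest step---the one the paper also leaves open---remains to be done. Note too that even the $k=2$ case you cite as handled is only partially done in the paper: Theorem~\ref{maintheorem} supplies $S_1(d)^{2\binom{2n}{n-2}}=d^{2\binom{2n}{n-2}}$ but not the accompanying $(d^2-4)^{\binom{2n}{n-2}}$, so your claim that the program ``is to upgrade this to the full factor'' understates what remains already at $k=2$.
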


\begin{conjecture}\label{conjecturer}\cite{IM2} For $n\geq 2$,
    $$\det (\tilde{G}_n^{Mb_{n,1}}) = \prod_{k=2}^n (d^2-4)^{\binom{2n}{n-k}}(S_{k-1}(d))^{2 \binom{2n}{n-k}}.$$
\end{conjecture}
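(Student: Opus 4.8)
The plan is to prove the identity in its Chebyshev-first-kind form and then to split the argument into a ``lower bound'' (each conjectured factor divides the determinant with the stated exponent) and a matching degree count. First I would record the equivalent target: by Lemma~\ref{lemma:restatement2} and Lemma~\ref{Lemma:chebyshev},
$$(d^2-4)(S_{k-1}(d))^2=(T_k(d))^2-4=T_{2k}(d)-2=(T_k(d)-2)(T_k(d)+2),$$
so it suffices to show $\det(\tilde G_n^{Mb_{n,1}})=\prod_{k=2}^n(T_{2k}(d)-2)^{\binom{2n}{n-k}}$. A preliminary reduction I would establish is that $\det(\tilde G_n^{Mb_{n,1}})$ is independent of $x$ and $z$: after the substitution $y=0,\,w=1$ every entry is a monomial $d^m x^a z^b$, and I expect the $x$- and $z$-weights to factor through a monomial diagonal gauge $\tilde G_n^{Mb_{n,1}}=D_1\,M(d)\,D_2$ whose total diagonal weight cancels, so that the determinant reduces to $\det M(d)$, a polynomial in $d$ alone. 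Isolating this gauge is the cleanest way to explain why the conjectured answer sees neither $x$ nor $z$.

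For the main computation I would adapt the Jones--Wenzl / cell-module machinery of the Temperley--Lieb category to the M\"obius band. The set $Mb_{n,1}$ is identified, via cutting a neighbourhood of the crosscap, with crossingless connections in an annulus having $2n$ outer and $2$ inner marked points, so it carries a filtration by a ``crosscap winding'' statistic $k$. The goal is a change of basis, built from idempotents decorating the arcs that enter the crosscap, putting the Gram form into block form with one block per level $k\in\{2,\dots,n\}$. By invariance of the form under the annular action and Schur's lemma, each block should be a scalar matrix; the scalar for level $k$ is the self-pairing of the diagram in which $2k$ strands run in parallel through the crosscap. When the two M\"obius bands are glued the crosscap reverses orientation, and this non-orientable identification is exactly what doubles $k$ and produces the shift, turning the naive annular value $(T_k(d))^2$ into $T_{2k}(d)-2$. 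Computing this closed-curve evaluation in the Klein bottle and confirming it equals $T_{2k}(d)-2$ is the first technical heart of the argument.

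The multiplicity at level $k$ should be $\binom{2n}{n-k}$, the dimension of the corresponding level-$k$ space; I would prove this by a bijection between a basis of that block and the objects counted by $\binom{2n}{n-k}$ (lattice paths or $(n-k)$-subsets), generalizing the null-vector construction already carried out for $k=2$ in the proof of Theorem~\ref{maintheorem}. Concretely, for each irreducible factor $p(d)$ of $T_{2k}(d)-2$ I would exhibit $\binom{2n}{n-k}$ independent vectors annihilated by $\tilde G_n^{Mb_{n,1}}$ modulo $p(d)$, which gives $\prod_{k=2}^n (T_{2k}(d)-2)^{\binom{2n}{n-k}}\mid \det(\tilde G_n^{Mb_{n,1}})$. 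To upgrade this divisibility to equality I would compare degrees: compute $\deg_d\det(\tilde G_n^{Mb_{n,1}})$ from the leading behaviour of the matrix entries and check it equals $\sum_{k=2}^n 2k\binom{2n}{n-k}=\deg_d\prod_{k=2}^n(T_{2k}(d)-2)^{\binom{2n}{n-k}}$; the two sides are then equal-degree polynomials differing by a factor forced to be a constant, and a single evaluation (matching leading coefficients, both monic) pins the constant to $1$.

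The hard part will be the representation-theoretic input of the second paragraph: the annular Temperley--Lieb cell modules must be replaced by their M\"obius-band analogues, and one must prove both that the Gram form is genuinely block-diagonalized by the crosscap idempotents (so that no eigenvalue is missed and the form is generically nondegenerate on each level) and that the twisted closed-curve evaluation yields precisely $T_{2k}(d)-2$ rather than some other Chebyshev combination. Establishing semisimplicity of the relevant module over $\mathbb{Z}[d]$ generically, and controlling the crosscap twist's effect on the idempotents, is where the non-orientability stops the annular arguments from transferring verbatim; everything else (the degree count, the $x,z$-gauge, and the multiplicity bijection) is essentially bookkeeping built on the $k=2$ case already handled in Theorem~\ref{maintheorem}.
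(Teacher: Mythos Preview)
The statement you are attempting to prove is a \emph{conjecture} in the paper, not a theorem: the paper gives no proof of Conjecture~\ref{conjecturer}. What the paper does prove is the much weaker Theorem~\ref{maintheorem} (the $k=2$ factor $S_1(d)^{2\binom{2n}{n-2}}$ divides the determinant), and in Section~\ref{FutDirec} it merely \emph{suggests} that Jones--Wenzl idempotents may be the right tool. So there is no ``paper's own proof'' to compare against; your proposal is a research plan for an open problem.

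As a plan it is reasonable and aligns with the paper's hint, but it is not a proof. Two specific comments. First, your preliminary reduction is over-engineered: in $Mb_{n,1}$ each diagram has exactly one arc through the crosscap, so after gluing, the crosscap arcs either merge into a single $w$-curve or split into one $x$-curve and one $y$-curve; hence every entry of $G_n^{Mb_{n,1}}$ carries a factor of $w$ or of $xy$, and after $y=0,\,w=1$ the variable $x$ simply never appears---no diagonal gauge is needed. (Whether $z$ can appear requires a separate argument about the non-crosscap arcs; the paper's examples suggest it does not.) Second, the substantive content of your plan is entirely in the paragraph you yourself flag as ``the hard part'': block-diagonalizing the form via M\"obius-band analogues of annular Temperley--Lieb cell modules, proving the resulting blocks are scalar, and computing that scalar as $T_{2k}(d)-2$. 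None of these steps is carried out, and the non-orientable twist is precisely where the standard annular machinery fails to transfer. Until those computations are done, what you have is a strategy consistent with the paper's speculation, not a proof; the degree count and leading-coefficient match at the end are routine only once the divisibility is in hand.
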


This conjecture was confirmed for the cases $n=2,3$, and $4$ in \cite{IM2}. We will prove a factor of $\det (\tilde{G}_n^{Mb_{n,1}})$ that supports the conjecture.

\begin{theorem}\label{maintheorem} For $n\geq 2$, $\det(\tilde{G}_{n}^{Mb_{n, 1}})$ is divisible by
    $S_1(d)^{2k}$,  where $k = \binom{2n}{n-2}$.
\end{theorem}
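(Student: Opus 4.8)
The plan is to deduce the divisibility from a symmetric change of basis on $\tilde G_{n}^{Mb_{n,1}}$, viewed as a symmetric matrix over the UFD $R=\mathbb{Z}[d,z,x]$ (its entries are monomials $d^{m}x^{a}z^{b}$ or $0$, and $d$ is a prime of $R$). Set $N=\binom{2n}{n-1}$ and $k=\binom{2n}{n-2}$. The core is the following elementary reduction: it suffices to produce $q_{1},\dots,q_{k}$ in the $R$-span of $Mb_{n,1}$ which, together with $N-k$ of the diagrams of $Mb_{n,1}$, form an $R$-basis of that span (equivalently, the transition matrix is unimodular) and satisfy $\langle q_{i},m\rangle_{Mb}\in dR$ for all $m\in Mb_{n,1}$ and $\langle q_{i},q_{j}\rangle_{Mb}\in d^{2}R$ for all $i,j\le k$. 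In such a basis the Gram matrix has the block shape $(\begin{smallmatrix}d^{2}A & dB\\ dB^{\top} & C\end{smallmatrix})$ with $A$ of size $k$ — the top-left block coming from the second condition and the off-diagonal blocks from the first (using symmetry) — and extracting a factor $d$ from each of the first $k$ rows and then from each of the first $k$ columns gives, by unimodularity, $\det \tilde G_{n}^{Mb_{n,1}}=d^{2k}\det(\begin{smallmatrix}A & B\\ B^{\top} & C\end{smallmatrix})$. Since $S_{1}(d)=d$ and $k=\binom{2n}{n-2}$, this is exactly Theorem~\ref{maintheorem}. (Equivalently: one wants a $k$-dimensional subspace on which the bilinear form vanishes modulo $d$ and on which its term linear in $d$ also vanishes.)

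Natural candidates for the $q_{i}$ come from Jones--Wenzl idempotents, the usual origin of Chebyshev factors in Gram determinants of Temperley--Lieb type. I would insert the second Jones--Wenzl projector, in denominator-cleared form $d\,\mathrm{id}-U$ (here $U$ is the cup--cap tangle on two adjacent strands, $U^{2}=dU$, and $\mathrm{cap}\circ(d\,\mathrm{id}-U)=0$), at a pair of strands adjacent to the arc through the crosscap, over a family of $\binom{2n}{n-2}$ reference diagrams $m^{0}_{i}\in Mb_{n,1}$. This produces integral vectors $q_{i}=d\,m^{0}_{i}-m^{1}_{i}$, with $m^{1}_{i}$ the diagram obtained by the turnback replacement, to be swapped into the basis in place of the $m^{1}_{i}$ so that the change of basis is triangular with $\pm1$ on the diagonal. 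The annihilation-by-turnbacks property makes a great many pairings $\langle q_{i},\,\cdot\,\rangle_{Mb}$ vanish outright; the non-vanishing ones are to be evaluated using the Chebyshev identities of Section~\ref{DefisChebyPolys} — the product-to-sum formulas and Lemma~\ref{lemma:restatement2}, $(T_{m}(d))^{2}-4=(d^{2}-4)(S_{m-1}(d))^{2}$ — applied to the families of parallel essential and trivial curves that appear when the inserted strands, glued to their mirror images, close up alongside the crosscap images on $Kb\,\hat{\times}\,I$. The projector $d\,\mathrm{id}-U$ is designed precisely so that, after this evaluation, each $\langle q_{i},m\rangle_{Mb}$ picks up a factor $d$ and each $\langle q_{i},q_{j}\rangle_{Mb}$ a factor $d^{2}=S_{1}(d)^{2}$.

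The main obstacle — where I expect almost all of the work to lie — is this curve-counting on $Kb\,\hat{\times}\,I$ carried out uniformly: one must check, across all $\binom{2n}{n-2}$ reference diagrams and all partners in $Mb_{n,1}$, that every non-vanishing pairing of $q_{i}$ genuinely produces a contractible component (and two of them when both arguments are $q$'s), rather than only $x$- or $z$-type essential curves that would carry no power of $d$. Pinning down a reference family for which this holds — and for which the $m^{1}_{i}$ are pairwise distinct and disjoint from the retained diagrams, so that the basis swap is an honest unimodular change — is the combinatorial heart of the argument; the algebra is then packaged by the Chebyshev identities already established in Section~\ref{DefisChebyPolys}. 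The same setup should, via the $(d^{2}-4)$ on the right of Lemma~\ref{lemma:restatement2}, also reach the companion factor $(d^{2}-4)^{\binom{2n}{n-2}}$ predicted by Conjecture~\ref{conjecturer}, but Theorem~\ref{maintheorem} extracts only the $S_{1}(d)$-part, for which the bound $d^{2k}$ already follows once the block shape above is in hand.
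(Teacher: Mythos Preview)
What you have written is a strategy, not a proof. The linear-algebra reduction in your first paragraph is fine, but the entire content of Theorem~\ref{maintheorem} is precisely the ``curve-counting on $Kb\,\hat\times\,I$ carried out uniformly'' that you explicitly defer. You have not specified the reference family $\{m_i^0\}$, have not verified that the $m_i^1$ are distinct (so that the swap is unimodular), and have not checked either divisibility condition in a single case. Saying that the Jones--Wenzl projector ``is designed precisely so that'' the factors of $d$ appear is an assertion, not an argument: the second projector kills turnbacks, but the relevant closed curves here arise from gluing against elements of $Mb_{n,1}$ on a Klein bottle, and it is not automatic that every non-vanishing pairing produces a contractible component rather than an $x$- or $z$-curve. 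You flag this yourself as the main obstacle; until it is done, nothing has been proved.

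The paper's proof is both simpler and structurally different. It does not use Jones--Wenzl at all. Instead, it groups the elements of $Mb_{n,1}$ into $\binom{2n}{n-2}$ classes $\{m_1,m_2,m_3,m_4\}$ of four diagrams that agree away from four marked points $\{x_i,x_j,x_k,x_s\}$ and differ only in which of two arcs passes through the crosscap (not in a cup--cap turnback). For each class it performs the two column operations $c_1\mapsto c_1-c_2$ and $c_3\mapsto c_3-c_4$ and shows, by an exhaustive case analysis on how an arbitrary $m\in Mb_{n,1}$ can meet these four points, that both new columns are divisible by $d$. This yields $2k$ columns divisible by $d$, hence $d^{2k}\mid\det(\tilde G_n^{Mb_{n,1}})$, with no need for your stronger $d^2$ condition on the $q_i$--$q_j$ pairings. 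The symmetry being exploited is a crosscap-swap, not the Temperley--Lieb relation $U^2=dU$; if you want to salvage your approach, the natural candidates for the $q_i$ are the differences $m_1-m_2$ and $m_3-m_4$ from these classes, not $d\,m^0-m^1$.
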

\begin{proof}
Consider the class of elements $\mathcal{M}$ consisting of four crossingless connections that differ only between the two arcs, shown in Figure \ref{fig:class}, where in each case the arcs are attached to four fixed points. Let $X_n = \{ x_1, \dots, x_{2n}\}$ be the collection of the $2n$ marked points on the boundary and let $X = \{x_i, x_j, x_k, x_s\}$ be the four fixed points in the class $\mathcal{M}$. \\

There are $\binom{2n}{n-2}$ classes. Let $\pmb m$ be a crossingless connection constructed from choosing $n-2$ marked points on the boundary and attaching arcs to them as discussed in the ``children playing a game" proof of Theorem 7.2.3 in \cite{PBIMW}. This construction gives $\binom{2n}{n-2}$ distinct connections between $2(n-2)$ points, denote this collection by $\mathcal{M}'$. Denote the collection of $2(n-2)$ points connected by arcs in $\pmb m$ by $X_n \backslash X$, then points in $X$ have a crossingless path to the crosscap. Each class $\mathcal{M}$ is constructed from a distinct element $\pmb m \in \mathcal{M}'$: The four elements are constructed by connecting the points in $X$ as shown in Figure \ref{fig:class}.
    \begin{figure}[ht]
    \centering
    $$ \left\{\vcenter{\hbox{
\begin{overpic}[scale = 1.5]{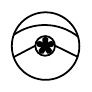}
\put(-1, 20){$x_j$}
\put(-1, 40){$x_i$}
\put(58, 20){$x_k$}
\put(58, 40){$x_s$}
\put(29, 0){$m_1$}
\end{overpic} }} \quad \vcenter{\hbox{
\begin{overpic}[scale = 1.5]{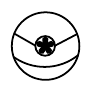}
\put(-1, 20){$x_j$}
\put(-1, 40){$x_i$}
\put(58, 20){$x_k$}
\put(58, 40){$x_s$}
\put(29, 0){$m_2$}
\end{overpic} }} \quad \vcenter{\hbox{
\begin{overpic}[scale = 1.5]{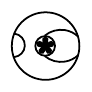}
\put(-1, 20){$x_j$}
\put(-1, 40){$x_i$}
\put(58, 20){$x_k$}
\put(58, 40){$x_s$}
\put(29, 0){$m_3$}
\end{overpic} }} \quad \vcenter{\hbox{
\begin{overpic}[scale = 1.5]{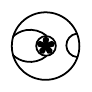}
\put(-1, 20){$x_j$}
\put(-1, 40){$x_i$}
\put(58, 20){$x_k$}
\put(58, 40){$x_s$}
\put(29, 0){$m_4$}
\end{overpic} }}\right\}$$
    \caption{The class of elements $\mathcal{M}$ that differ by two curves.}
    \label{fig:class}
\end{figure}

Notice that $m_1$ and $m_2$ are constructed from each other by swapping which arc intersects the crosscap. Similarly for $m_3$ and $m_4$. \\

We create a $4 \times 4$ matrix from the bilinear form of the four elements. Notice that, the $2n-2$ arcs that are not connected to $x_i, x_j, x_k,$ and $x_s$ have the same connection in all elements in $\mathcal{M}$. Furthermore, the connections made between the points $X_n \backslash X$ must all form homotopically trivial curves. So for $n \geq 3$, the entries of this matrix will have a multiple of a monomial $u=d^p$ for some $p\geq 0$. For $n=2$, $u=1$. 

\begin{equation}
    G = (\langle m_i, m_j \rangle )_{1\leq i,j \leq 4}=\begin{pmatrix}
        d u & 0 & u& u\\
        0 & d u & u& u \\
        u & u & d u & 0\\
        u & u & 0 & d u \\
    \end{pmatrix}.
\end{equation}

$G^*$ is constructed from $G$ by replacing $c_1$ with $c_1-c_2$ as well as $c_3$ with  $c_3-c_4$. Notice that in $G^*$ columns $c_1$ and $c_3$ each have a $d$ factor. 

\begin{equation}
    G^* = \begin{pmatrix}
        d u & 0 & 0& u\\
        -d u & d u & 0& u \\
        0 & u & d u & 0\\
        0 & u & -d u  & d u \\
    \end{pmatrix}.
\end{equation}

Now, take an arbitrary element $m \in Mb_{n,1}$, then  $\langle m, m_1\rangle$ is equal to $0, u_m$, or $d u_m'$ for some monomials $u_m$ and $u_m'$ where the factors $0, 1$, or $d$ are determined by the arcs in $m$ attached to $x_i, x_j, x_k$ and $x_s$. \\

First note that if $m$ is in a different class of elements $\mathcal{M}'$ where elements in the class differ by two arcs attached to points in $X$, then the same argument can be made with a different monomial. So we will assume that in $m$ there exists at least one arc with one boundary point attached to a point in $X$ and another boundary point attached to $X_n - X$. Also note that a change from $m_1$ to $m_2, m_3,$ or $m_4$ will only affect the simple closed curves formed from arcs attached to the points in $X$. We have two cases to consider after taking the bilinear form of $m$ and $m_1$: \\

\begin{enumerate}
\item[Case 1] Suppose one simple closed curve $\alpha$ is formed from the arcs attached to the points in $X$. Then either no arcs in $m$ forming $\alpha$ intersect the crosscap, or exactly one does.
\begin{enumerate}
    \item Suppose no arcs in $m$ forming $\alpha$ intersect the crosscap, then $\alpha$ intersects the crosscap in $m_1$ and there exists another simple closed curve $\beta$ that intersects the crosscap in $m$. Therefore, $\langle m, m_1\rangle=0$. The bilinear form is left unchanged by the skein move $m_1$ to $m_2$ because the skein move doesn't change $\alpha$ or $\beta$. Therefore,  $\langle m, m_2\rangle=0$. A change from $m_1$ to $m_3$ or $m_4$ keeps $\beta$ unchanged so there must exist an arc that intersects the crosscap in $m_1$ but not in $m$. So, $\langle m, m_3\rangle=\langle m, m_4\rangle=0$.

     \begin{figure}[ht]
    \centering
   \begin{subfigure}{.23\textwidth}
\centering
$ \vcenter{\hbox{
\begin{overpic}[scale = 1]{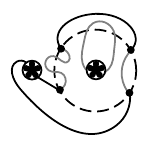}
\end{overpic} }} $
\caption{$\langle m, m_1\rangle = xyu_m=0$.} \label{case1a1}
\end{subfigure}
 \begin{subfigure}{.23\textwidth}
\centering
$ \vcenter{\hbox{
\begin{overpic}[scale = 1]{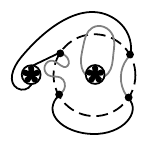}
\end{overpic} }} $
\caption{$\langle m, m_2\rangle = xyu_m=0$.} \label{case1a11}
\end{subfigure}
 \begin{subfigure}{.23\textwidth}
\centering
$ \vcenter{\hbox{
\begin{overpic}[scale = 1]{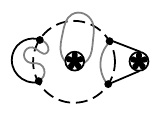}
\end{overpic} }} $
\caption{$\langle m, m_3\rangle = dxyu_m=0$.} \label{case1a12}
\end{subfigure}
 \begin{subfigure}{.23\textwidth}
\centering
$ \vcenter{\hbox{
\begin{overpic}[scale = 1]{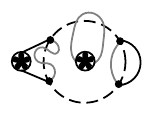}
\end{overpic} }} $
\caption{$\langle m, m_4\rangle = dxyu_m=0$.} \label{case1a13}
\end{subfigure}
    \caption{Example of Case 1(a).}
    \label{fig:case1a}
\end{figure}
    
    \item Suppose exactly one arc in $m$ forming $\alpha$ intersects the crosscap, then $\alpha$ intersects the crosscap twice. Therefore, $\langle m, m_1\rangle=u_m$, where $u_m$ is a monomial and all of the curves formed from the bilinear form is left unchanged by the skein move from $m_1$ to $m_2$. That is,  $\langle m, m_2\rangle=u_m$. A change from $m_1$ to $m_3$ or $m_4$ will change $\alpha$ to two simple closed curves, one from arcs attached to $\{x_i, x_j\}$ and the second from arcs attached to $\{x_k, x_s\}$, denote them by $\gamma_{i,j}$, $\gamma_{k,s}$, respectively.
    \begin{enumerate}
        \item Suppose $\gamma_{i,j}$ intersects the crosscap in $m$. Then $\gamma_{k,s}$ will intersect the crosscap in $m_3$ yielding $\langle m, m_3 \rangle =0$. Furthermore, $\gamma_{i,j}$ will intersect the crosscap in $m_4$ and $\gamma_{k,s}$ will be a homotopically trivial curve. Therefore,  $\langle m, m_4 \rangle = d u_m$.
         \item Suppose $\gamma_{k,s}$ intersects the crosscap in $m$. Then $\gamma_{k,s}$ will intersect the crosscap in $m_3$ and $\gamma_{k,s}$ will be a homotopically trivial curve. Therefore, $\langle m, m_3 \rangle = d u_m$. Furthermore, $\gamma_{i,j}$ will intersect the crosscap in $m_4$ which implies that  $\langle m, m_4 \rangle = 0$.
    \end{enumerate}
     \begin{figure}[ht]
    \centering
   \begin{subfigure}{.23\textwidth}
\centering
$ \vcenter{\hbox{
\begin{overpic}[scale = 1]{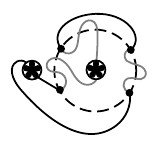}
\end{overpic} }} $
\caption{$\langle m, m_1\rangle = wu_m=u_m$.} \label{case1ai1}
\end{subfigure}
 \begin{subfigure}{.23\textwidth}
\centering
$ \vcenter{\hbox{
\begin{overpic}[scale = 1]{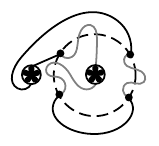}
\end{overpic} }} $
\caption{$\langle m, m_2\rangle = wu_m=u_m$.} \label{case1ai2}
\end{subfigure}
 \begin{subfigure}{.23\textwidth}
\centering
$ \vcenter{\hbox{
\begin{overpic}[scale = 1]{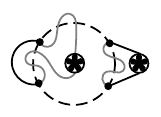}
\end{overpic} }} $
\caption{$\langle m, m_3\rangle = xyu_m=0$.} \label{case1ai3}
\end{subfigure}
 \begin{subfigure}{.24\textwidth}
\centering
$ \vcenter{\hbox{
\begin{overpic}[scale = 1]{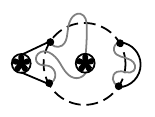}
\end{overpic} }} $
\caption{$\langle m, m_4\rangle = wdu_m=du_m$.} \label{case1ai4}
\end{subfigure}
    \caption{Example of Case 1(b)(i).}
    \label{fig:case1b}
\end{figure}
  
\end{enumerate}

\item[Case 2] Suppose two simple closed curves $\alpha$ and $\beta$ are formed from the points in $X$. Then one simple closed curve, say $\alpha$ is formed from arcs in $m$ attached to $\{x_i, x_s\}$ and $\beta$ is formed from arcs in $m$ attached to $\{x_j, x_k\}$.
\begin{enumerate}
    \item Suppose $\alpha$ intersects the crosscap in $m$, then $\beta$ intersects the crosscap in $m_1$. Therefore, $\langle m, m_1\rangle=0$. The skein move from $m_1$ to $m_2$ changes the $y$-curve to a homotopically trivial curve and the $x$-curve to a $w$-curve. This implies $\langle m, m_2\rangle= d u_m'$, where $u_m'$ is a monomial. Furthermore, a change from $m_1$ to $m_3$ or $m_4$ changes $\alpha$ and $\beta$ to one simple closed curve that intersects both crosscaps while leaving everything else unchanged. Therefore,  $\langle m, m_3\rangle=\langle m, m_4\rangle=u_m'$. 
    
  \begin{figure}[ht]
    \centering
   \begin{subfigure}{.23\textwidth}
\centering
$ \vcenter{\hbox{
\begin{overpic}[scale = 1]{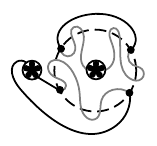}
\end{overpic} }} $
\caption{$\langle m, m_1\rangle = xyu_m'=0$.} \label{case2a1}
\end{subfigure}
 \begin{subfigure}{.23\textwidth}
\centering
$ \vcenter{\hbox{
\begin{overpic}[scale = 1]{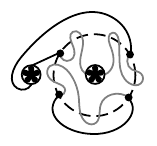}
\end{overpic} }} $
\caption{$\langle m, m_2\rangle = dwu_m'=du_m'$.} \label{case2a2}
\end{subfigure}
 \begin{subfigure}{.23\textwidth}
\centering
$ \vcenter{\hbox{
\begin{overpic}[scale = 1]{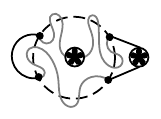}
\end{overpic} }} $
\caption{$\langle m, m_3\rangle = wu_m'=u_m'$.} \label{case2a3}
\end{subfigure}
 \begin{subfigure}{.23\textwidth}
\centering
$ \vcenter{\hbox{
\begin{overpic}[scale = 1]{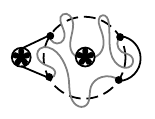}
\end{overpic} }} $
\caption{$\langle m, m_4\rangle = wu_m=u_m'$.} \label{case2a4}
\end{subfigure}
    \caption{Example of Case 2(a).}
    \label{fig:case2a}
\end{figure}

     \item Suppose $\beta$ intersects the crosscap in $m$, then it intersects the crosscap twice. This implies that the second simple closed curve is homotopically trivial. Therefore, $\langle m, m_1\rangle=du_m''$, where $u_m''$ is a monomial. The skein move from $m_1$ to $m_2$ changes the homotopically trivial curve to a $y$-curve and the $w$-curve to an $x$-curve. This implies $\langle m, m_2\rangle=0$. Furthermore, a change from $m_1$ to $m_3$ or $m_4$ changes $\alpha$ and $\beta$ to one simple closed curve that intersects both crosscaps while leaving everything else unchanged. Therefore,  $\langle m, m_3\rangle=\langle m, m_4\rangle=u_m''$.
     
     \begin{figure}[ht]
    \centering
   \begin{subfigure}{.24\textwidth}
\centering
$ \vcenter{\hbox{
\begin{overpic}[scale = 1]{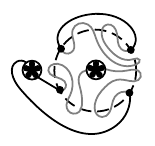}
\end{overpic} }} $
\caption{$\langle m, m_1\rangle = dwu_m''=du_m''$.} \label{case2b1}
\end{subfigure}
 \begin{subfigure}{.23\textwidth}
\centering
$ \vcenter{\hbox{
\begin{overpic}[scale = 1]{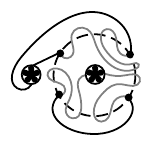}
\end{overpic} }} $
\caption{$\langle m, m_2\rangle = xyu_m''=0$.} \label{case2b2}
\end{subfigure}
 \begin{subfigure}{.23\textwidth}
\centering
$ \vcenter{\hbox{
\begin{overpic}[scale = 1]{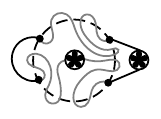}
\end{overpic} }} $
\caption{$\langle m, m_3\rangle = wu_m''=u_m''$.} \label{case2b3}
\end{subfigure}
 \begin{subfigure}{.23\textwidth}
\centering
$ \vcenter{\hbox{
\begin{overpic}[scale = 1]{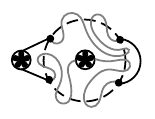}
\end{overpic} }} $
\caption{$\langle m, m_4\rangle = wu_m''=u_m''$.} \label{case2b4}
\end{subfigure}
    \caption{Example of Case 2(b).}
    \label{fig:case2b}
\end{figure}
\end{enumerate}
\end{enumerate}

Therefore, for any element $m \in Mb_{0,1}$ there exists monomials $u_m, u_m', u_m''$ such that the block 

$\langle m, m_1\rangle, \langle m, m_2\rangle, \langle m, m_3\rangle, \langle m, m_4\rangle$ is equal to one of the rows of the matrix in Equation \ref{eqn:blockmatrix}. 

\begin{equation}\label{eqn:blockmatrix}
    \begin{bmatrix}
        0 & 0 & 0 & 0\\
        u_m & u_m & d u_m & 0\\
        u_m & u_m & 0 & d u_m\\
        0 & d u_m' & u_m' & u_m' \\
        d u_m'' & 0 & u_m'' & u_m''
    \end{bmatrix}.
\end{equation}

  Thus the column operations $c_1$ to $c_1'=c_1-c_2$ and $c_3$ to $c_3'=c_3-c_4$ yield a zero or a monomial multiple of $d$. 
  
  \begin{equation}\label{eqn:blockmatrixcolumn}
    \begin{bmatrix}
        0 & 0 & 0 & 0\\
        0 & u_m & d u_m & 0\\
        0 & u_m & -d u_m & d u_m\\
        -d u_m' & d u_m' & 0 & u_m' \\
        d u_m'' & 0 & 0 & u_m''
    \end{bmatrix}
\end{equation}
  
  Therefore the new columns $c_1'$ and $c_3'$ are divisible by $d$. Furthermore, this is true for any class of elements in $\mathcal{M}$. Since there are $\binom{2n}{n-2}$ classes then $\det(\tilde{G}_n^{Mb_{n,1}})$ is divisible by $d^{2 \binom{2n}{n-2}}$. 
  
\end{proof}

\section{Program for Computing Bilinear Form}\label{algorithm}

The following summarizes a method for computing the bilinear form of elements in $(Mb)_1$ symbolically. These ideas are included in a Mathematica program which can be found in \cite{C}.

\subsection{Involutive Notation}

In \cite{IM1, IM2}, elements of $(Mb)_n$ are notated following the ``children playing a game" notation. These finite sequences of length $<n$ give a complete description of the crossingless connections between $2n$ points on the boundary of the M\"obius band. We contrast this notational style with \textit{involutive notation}, defined here.

\begin{remark}
Any element in $(Mb)_n$ can be written in \textit{involutive notation }, 

\[ m = (x_1  \ x_1') (x_2 \ x_2') \dots (x_k \ x_k')(x_{2k+1})(x_{2k+2}) \dots (x_{2n}) \]

where $x_i < x_j \iff i < j.$

The set of tuples $(x_i \ x_i')$ denote arcs which travel counterclockwise around the internal crosscap from vertex $x_i$ to vertex $x_i'$. Each singleton $(x_j)$ denotes a vertex connected to the crosscap by an arc, the set of which we call the \textit{fixed points}: 

\[ F(m) = \{x_i  \mid 2k+1 \leq i \leq 2n \}.  \]
\end{remark}

To write a program which computes $\langle m_1, m_2 \rangle$ for $m_1, m_2$ in involutive notation, we pass through a closely-related object: a graph called $G_{\langle m_1, m_2 \rangle}$.

\begin{definition}
For $m_1,m_2 \in (Mb)_n$,  define $G_{\langle m_1, m_2 \rangle}$ to be the graph with 

    \begin{itemize}
        \item $V(G_{\langle m_1, m_2 \rangle}) = \{i\}_{1=1}^{2n}$
        \item $E(G)=T \cup EF(m_1) \cup EF(m_2)$  where 
        
        \[ T=\{uv \mid (u \ v) \in m_1 \text{ or } (u \ v) \in m_2 \}, \]

        \[ EF(m_i) = \{f_i f_j \mid f_i,f_j \in F(m_1), \text{ and } j=\left(i+\frac{|F(m_i)|}{2}\right) \mod{|F(m_i)|} \}.\footnote{A word on cyclic group representatives: in this section, we take $[n]_{2n}$ as the canonical representative of the identity element of $\mathbb{Z}_{2n}.$ This choice can be avoided by relabelling the vertices along the boundary of the M\"obius band to have smallest vertex label $0$. We chose not to do this because 1) the CPaG notation has labelled vertices starting with 1, not 0, and 2) having smallest entry 1 is consistent with the standard notation for $S_{2n}$.}\]
    \end{itemize}
\end{definition}

The sets which comprise the edges of $G_{\langle m_1, m_2 \rangle}$ are 
\begin{itemize}
    \item $T$; edges between the labelled vertices which appear as entries of a transposition in $m_1$ or $m_2$. This edge set is in bijection with all arcs in $\langle m_1, m_2 \rangle$ disjoint from the crosscap.
    \item $EF(m_i)$; edges between vertices which are connected by an arc through the crosscap in $m_i$. The topology of the 
    M\"obius band ensures that the subset of vertices with connections through the crosscap are connected antipodally. 
    \item All three sets define a clear bijection
    \[ \{\text{edges of } G_{\langle m_1, m_2 \rangle} \} \longleftrightarrow \{ \text{arcs between vertices in } \langle m_1, m_2 \rangle \}.\]

    and thus 

    \[ \{ \text{simple closed curves in } KB\} \longleftrightarrow \{ \text{ disjoint cycles in }  G_{\langle m_1, m_2 \rangle}\}.\]
\end{itemize}

\begin{example}
    Let $m_1 = (2 \ 5)(3 \ 4)(1)(6)$,  $m_2 = (6 \ 1) (2)(3)(4)(5),$. Then 
    \[ T = \{ \{2,5\}, \{3,4\}, \{6,1\}\} \]
    \[ EF(m_1) = \{  \{1,6\} \}\]
    \[ EF(m_2) = \{ \{2,4\}, \{3,5\} \}.\]

    Thus, the graph $G_{\langle m_1, m_2 \rangle}$ is illustrated in Figure \ref{Graph}.

  \begin{figure}[ht]
    \centering
   \begin{subfigure}{.24\textwidth}
\centering
$ \vcenter{\hbox{
\begin{overpic}[scale = 1]{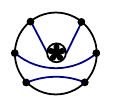}
\end{overpic} }} $
\caption{$m_1 = (25)(34)(1)(6)$.} \label{m1}
\end{subfigure}
 \begin{subfigure}{.23\textwidth}
\centering
$ \vcenter{\hbox{
\begin{overpic}[scale = 1]{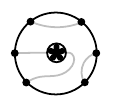}
\end{overpic} }} $
\caption{$m_2=(45)(61)(2)(3)$.} \label{m2}
\end{subfigure}
 \begin{subfigure}{.23\textwidth}
\centering
$ \vcenter{\hbox{
\begin{overpic}[scale = 1]{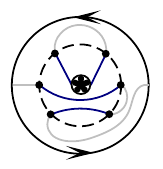}
\end{overpic} }} $
\caption{$\langle m_1, m_2\rangle = xy$.} \label{bilinearxy}
\end{subfigure}
 \begin{subfigure}{.23\textwidth}
\centering
$ \vcenter{\hbox{
\begin{overpic}[scale = 1]{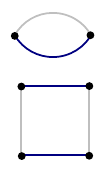}
\end{overpic} }}$
\caption{$G_{\langle m_1, m_2 \rangle}$.} \label{Graph}
\end{subfigure}
    \caption{Example of graph $G_{\langle m_1, m_2 \rangle}$.}
    \label{fig:exampleofgraph}
\end{figure}
\end{example}

\begin{definition}
  Define  $C_{m_1, m_2} = \{C_1, C_2, \dots C_k\}$ such that $\bigsqcup_{i=1}^k C_i = G$; it should be clear that $\deg(\langle m_1, m_2 \rangle) = k$. We denote by $\langle C_i \rangle$ the homotopy type of the closed curve in $Mb$ corresponding to the component $C_i$.
\end{definition}

\subsection{Identifying Curves \& Components}

Since we work only with elements of $Mb_{n,1}$, no simple closed curves in $Kb$ will have more than one arc passing through the same crosscap. Thus, identifying $x,y,w$ curves is easy:

\begin{lemma}
    Let $C$ be a component of $G_{\langle m_1, m_2 \rangle}$;

    then 
    \[ \langle C \rangle = \begin{cases}
        x & (C \cap F(m_1) \neq \emptyset) \wedge (C \cap F(m_2) = \emptyset) \\
        y & (C \cap F(m_1) = \emptyset) \wedge (C \cap F(m_2) \neq \emptyset) \\
        w & (C \cap F(m_1) \neq \emptyset) \wedge (C \cap F(m_2) \neq \emptyset) \\
    \end{cases}.\]
\end{lemma}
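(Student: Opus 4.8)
The plan is to unwind the dictionary, set up in the paragraphs above, between the graph $G_{\langle m_1, m_2 \rangle}$ and the family of simple closed curves in $Kb$ obtained by gluing $m_1$ to the inversion of $m_2$, and then to read the homotopy type $\langle C \rangle$ directly off which crosscaps the corresponding curve passes through; the whole argument hinges on the hypothesis $m_1, m_2 \in Mb_{n,1}$, which forces $|F(m_1)| = |F(m_2)| = 2$. I would first record the graph-theoretic skeleton: each vertex $v \in \{1,\dots,2n\}$ is an endpoint of exactly one arc of $m_1$ and of exactly one arc of $m_2$, so $v$ has degree $2$ in $G_{\langle m_1, m_2 \rangle}$, hence every component $C$ is a cycle which at each of its vertices uses both incident edges. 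Since $|F(m_i)| = 2$, the set $EF(m_i)$ reduces to a single edge $e_i$ whose two endpoints are precisely the elements of $F(m_i)$. Combining these facts, $C \cap F(m_i) \neq \emptyset$ iff $C$ passes through an endpoint of $e_i$, and — as $C$ uses both edges at that vertex and one of them is $e_i$ — this holds iff $e_i \in E(C)$.

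Next I would transfer this to the surface. Under the bijection recalled above between edges of $G_{\langle m_1, m_2 \rangle}$ and arcs of $\langle m_1, m_2 \rangle$ (hence between cycles and simple closed curves), every $T$-edge corresponds to an arc disjoint from both crosscaps, while $e_1$ is the unique arc of $m_1$ running through the first crosscap and $e_2$ the unique arc of $m_2$ running through the second. Therefore $\langle C \rangle$ meets the first crosscap exactly when $e_1 \in E(C)$ and the second exactly when $e_2 \in E(C)$, and in each case it does so exactly once, because it is the only arc in the whole picture through that crosscap — this is the one place the $Mb_{n,1}$ hypothesis is genuinely used. Combining with the previous paragraph, $\langle C \rangle$ crosses crosscap $i$ iff $C \cap F(m_i) \neq \emptyset$. (The components meeting neither $F(m_1)$ nor $F(m_2)$ are exactly the ones giving rise to $d$- and $z$-curves, which the lemma does not address.)

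To finish I would invoke the classification of essential simple closed curves in $Kb$ underlying the definition of $\langle\ ,\ \rangle_{Mb}$ (cf.\ \cite{BIMP, IM1}): a curve meeting the first crosscap once and the second not at all has type $x$, one meeting the second once and the first not at all has type $y$, and one meeting each exactly once has type $w$. Feeding in the two equivalences above yields the three lines of the case split. The only non-formal ingredient — and the step I would be most careful about — is this classification together with the assertion that the antipodal edge $e_i$ genuinely represents an \emph{essential} passage through crosscap $i$ (rather than, say, a curve isotopic off it); both are part of the topological set-up for the type-$Mb$ bilinear form, so once they are cited the lemma follows at once, everything else being bookkeeping on the degree-$2$ graph $G_{\langle m_1, m_2 \rangle}$.
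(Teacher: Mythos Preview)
Your proposal is correct and follows the same idea as the paper's proof, which is a single sentence: ``The condition of $C \cap F(m_i)$ determines whether an arc passes through the internal crosscap, external crosscap, or both.'' You have simply unpacked this into explicit bookkeeping on the degree-$2$ graph and made the role of the $Mb_{n,1}$ hypothesis (forcing $|F(m_i)|=2$, hence a unique edge $e_i$ through each crosscap) explicit, which the paper leaves implicit in the sentence preceding the lemma.
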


\begin{proof}
    The condition of $C \cap F(m_i)$ determines whether an arc passes through the internal crosscap, external crosscap, or both.
\end{proof}

For $C$ a component of $G_{\langle m_1, m_2 \rangle}$ containing no fixed points, $\langle C \rangle \in \{d,z\}$, which we disambiguate in the following way.

Assume $C$ is a component of $G_{\langle m_1, m_2 \rangle}$ and let $x_m$ be the minimum vertex in the first component of a transposition from $m_1$. Let $V_C =  x_m \ x_m'  \dots... \dots \ x_m  = \{z_i \}_{i=1}^{|C| +1}$ be the vertex walk around $C$ beginning at $x_m$, continuing in the direction of $x_m'$, and ending again at $x_m$. 

Define \[ \psi_k(i, j) = \begin{cases}
    [j-i]_{2n} & \text{if }(i,j) \in m_k \\
    [i-j]_{2n} & \text{if }(j,i) \in m_k \\
\end{cases}\]
for $k=1,2$.

\begin{proposition}[$d$ and $z$ curves]
Let $C$ be a component of $G_{\langle m_1, m_2 \rangle}$ containing no fixed points. Let $V_C = \{z_i \}_{i=1}^{|C_{m_1, m_2}| +1}$ as previously defined.

Let \[ \Psi(C) = \sum_{k=1}^{|C|}\psi_1(z_{2 k-1}, z_{2k}) + \psi_2(z_{2 k}, z_{2k+1}). \]

\begin{enumerate}
    \item If $\Psi(C) = 0$, then $\langle C \rangle = d$
    \item If $\Psi(C) = \pm 2n$, then $\langle C \rangle = z$.
\end{enumerate}

\begin{proof}
    Each transposition $(i \ j)$ from an element $m \in Mb_n$ defines an arc tracing out an angle of $\pm \frac{2 \pi}{[j-i]_n \cdot n}$ around the boundary of the M\"obius band. The vertex walk $V_C$ is oriented in accordance with the first counterclockwise angle traced by $(x_m \ x_m') \in m_1$. The function $\psi_k$ determines the direction and length of the angles swept out by every transposition.
\end{proof}

\end{proposition}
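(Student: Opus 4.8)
The plan is to interpret $\Psi(C)$ as a winding number. First I would fix a concrete geometric model for the surface carrying $C$: when one forms $\langle m_1, m_2\rangle$ by gluing $m_1$ to the inversion of $m_2$ along the boundary circle carrying the $2n$ marked points, the result is a Klein bottle $Kb$ built from two M\"obius bands sharing that boundary. A component $C$ with $C \cap F(m_1) = C \cap F(m_2) = \emptyset$ uses no arc through either crosscap, so it is disjoint from neighborhoods of both crosscap cores. Removing those two M\"obius-band neighborhoods from $Kb$ leaves an orientable annulus $A$ (Euler characteristic $0$, two boundary circles) whose core is exactly the gluing circle on which the marked points sit; the core of $A$ is the curve of homotopy type $z$, and the contractible curves are those of type $d$. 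Thus a priori $\langle C\rangle \in \{d,z\}$, as already noted before the statement, and it suffices to read off which one from $\Psi(C)$.

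Second, I would place the $2n$ marked points at equal angular spacing $\frac{2\pi}{2n}$ on the core circle of $A$ and check that, with the counterclockwise convention of involutive notation, each arc of $m_1$ realizing a transposition $(i \ j)$ sweeps a net angle of $[j-i]_{2n}$ steps as one travels from $i$ to $j$, while the inversion used to glue $m_2$ reflects its arcs. Tracking the walk $V_C = \{z_i\}$, which alternates $m_1$-edges $(z_{2k-1}, z_{2k})$ with $m_2$-edges $(z_{2k}, z_{2k+1})$, the function $\psi_k$ is precisely the signed step-count of the traversed arc in each case, the two branches in its definition encoding whether the walk runs with or against the stored orientation of the arc. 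Summing over the closed walk, every intermediate angular position cancels and $\Psi(C)$ equals the total signed angular displacement, measured in units of $\frac{2\pi}{2n}$, accumulated on going once around $C$. Since $C$ closes up, this total is an integer multiple of a full revolution, so $\Psi(C) = 2n \cdot w(C)$, where $w(C)$ is the winding number of $C$ about the core of $A$.

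Finally, because $C$ is an embedded crossingless simple closed curve in the annulus $A$, its winding number satisfies $w(C) \in \{0, \pm 1\}$: an embedded loop in an annulus is either null-homotopic or isotopic to the core. If $w(C) = 0$ the curve bounds a disk in $A$ and is homotopically trivial, giving $\langle C\rangle = d$; if $w(C) = \pm 1$ it is isotopic to the core and $\langle C\rangle = z$. Combining with $\Psi(C) = 2n \cdot w(C)$ yields exactly the two stated implications: $\Psi(C) = 0 \Rightarrow \langle C\rangle = d$ and $\Psi(C) = \pm 2n \Rightarrow \langle C\rangle = z$. The main obstacle is the bookkeeping in the second step: one must verify that the orientation-reversal from inverting $m_2$, combined with the fixed counterclockwise convention for $m_1$, makes the per-arc contributions $\psi_1$ and $\psi_2$ add with consistent sign so that the sum telescopes to a genuine winding number rather than partially cancelling. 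Getting the sign conventions across the non-orientable gluing to line up is the delicate point; once that is settled, the conclusion follows from the elementary classification of simple closed curves in an annulus.
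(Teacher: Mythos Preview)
Your approach is essentially the same as the paper's: both interpret $\Psi(C)$ as the total signed angular displacement (equivalently, $2n$ times the winding number) of the curve around the boundary circle, and conclude from that whether the curve is null-homotopic or core-parallel. Your version is considerably more detailed---explicitly passing to the annulus obtained by deleting crosscap neighborhoods and invoking the classification of embedded simple closed curves there---and in fact flags the sign-convention bookkeeping across the inversion of $m_2$ that the paper's three-sentence sketch leaves implicit.
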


\section{Future directions}\label{FutDirec}
In \cite{IM2} it was observed that the conjectured formula for $\det (\tilde{G}_n^{Mb_{n,1}})$ is a conjectured factor of the $n$-th Gram determinant of type $Mb$, $D_n^{Mb}$. Recall that Conjecture \ref{newconjecture} was restated in Conjecture \ref{conjecturer} after applying new formulae for Chebyshev polynomials. Therefore, Qi Chen's conjecture can also be re-written in a more elegant way. Since these new restatements consist of products of Chebyshev polynomials of the second kind, it suggests that Jones-Wenzl idempotents may be used to prove Conjecture \ref{conjecturer} and Conjecture \ref{Qi}.

\begin{conjecture}[Chen] \label{Qi}\

Let $R = \mathbb{Z}[A^{\pm 1},w,x,y,z].$ Then
the Gram determinant of type {\it Mb} for $n \geq 1$, denoted by $D_n^{\mathit{Mb}}$, is:
 \begin{eqnarray*}
D^{\mathit{Mb}}_n(d,w,x,y,z) &=& \prod_{k=1}^n (T_k(d)+(-1)^kz)^{\binom{2n}{n-k} } \\
& & \prod\limits_{ \substack{k=1 \\ k\text{ odd }}}^n
\left((T_k(d) - (-1)^k z)T_k(w) -2xy\right)^{\binom{2n}{n-k}} \\
& & \prod\limits_{ \substack{k=1 \\ k\text{ even }}}^n
\left((T_k(d) - (-1)^kz)T_k(w)-2(2-z)\right)^{\binom{2n}{n-k}} \\
& & \prod_{i=1}^{n} D_{n,i},
\end{eqnarray*}

where $D_{n,i} = \prod\limits_{k=1+i}^n ((d^2-4)(S_{k-1}(d))^{2\binom{2n}{n-k}}$, and $i$ represents the number of curves passing through the crosscap.
\end{conjecture}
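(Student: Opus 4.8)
The plan is to compute $D_n^{\mathit{Mb}}$ by orthogonalizing the bilinear form $\langle\,,\,\rangle_{\mathit{Mb}}$ against a basis adapted to the Jones--Wenzl idempotents, as the paper suggests. First I would fix the geometric set-up: $\langle m_i, m_j\rangle_{\mathit{Mb}}$ is obtained by gluing $m_i$ to the mirror of $m_j$ inside the Klein bottle $Kb=\mathit{Mb}\cup_\partial \mathit{Mb}$ and weighting the five homotopy classes of simple closed curves by $d,x,y,z,w$. Thus $D_n^{\mathit{Mb}}$ is the determinant of a pairing $V_n\otimes V_n\to\mathbb{Z}[d,w,x,y,z]$ on the free module $V_n$ of crossingless connections, and the goal is to block-diagonalize this pairing so that each block is a scalar eigenvalue that can be read off against its multiplicity $\binom{2n}{n-k}$.

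The central device is the Jones--Wenzl projection of the bundle of strands crossing the crosscap. Cabling the crosscap by the idempotent $f_k$ decomposes $V_n$ into sectors indexed by $k$, with multiplicities governed by the half-diagram counts that accumulate to the exponents $\binom{2n}{n-k}$. I expect the three single-index products in Chen's formula to arise as the eigenvalue of the combined ``core-plus-crosscap-twist'' operator on the sector $f_k$: the orientation-preserving core contributes the Type~$B$ factor $T_k(d)+(-1)^k z$, while pushing the $k$-cable through the orientation-reversing crosscap contributes the $T_k(w)$-dependent factor. The split between $k$ odd and $k$ even (hence $-2xy$ versus $-2(2-z)$) should come from the parity of the number of strand-ends that must reconnect through the crosscap, which changes the homotopy type of the resulting closed curve and therefore its weight.

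The remaining double product $\prod_{i=1}^n D_{n,i}$ should fall out of the same decomposition restricted to the Temperley--Lieb (crosscap-disjoint) part of each sector: within the sector of $i$ crosscap-crossing curves the residual eigenvalues are the standard quantities built from $S_k(d)$, and Lemma~\ref{lemma:restatement2} together with Lemma~\ref{Lemma:chebyshev} rewrites their products as $(d^2-4)(S_{k-1}(d))^2=(T_k(d))^2-4=T_{2k}(d)-2$, giving exactly $D_{n,i}=\prod_{k=1+i}^n\big((d^2-4)(S_{k-1}(d))^2\big)^{\binom{2n}{n-k}}$. The multiplicity bookkeeping against $\binom{2n}{n-k}$ here is the same combinatorial count already verified in the small cases of Conjecture~\ref{conjecturer} and in Theorem~\ref{maintheorem}, which provides a useful internal check.

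The hard part will be the crosscap recoupling in the second step: determining how a Jones--Wenzl-projected bundle of $k$ parallel strands behaves when pushed through the orientation-reversing crosscap. This is the genuinely new ingredient beyond the orientable Type~$A$ and Type~$B$ computations, and it is responsible both for the $T_k(w)$ factors and for the odd/even dichotomy. I would attack it by induction on $k$ using the Wenzl recursion $f_k=f_{k-1}-\tfrac{S_{k-2}(d)}{S_{k-1}(d)}\,f_{k-1}e_{k-1}f_{k-1}$ (with $e_i$ the Temperley--Lieb cup--cap generators), tracking how curves that cross the crosscap once versus twice acquire the weights $x,y,w$, and establishing a crosscap analogue of the encircling/twist eigenvalue lemma. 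Verifying that the resulting eigenvalues specialize correctly --- for instance to the proven factors of $\det(\tilde G_n^{Mb_{n,1}})$ under $y=0$, $w=1$ --- would confirm the whole computation.
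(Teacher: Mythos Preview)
The paper does not prove this statement: Conjecture~\ref{Qi} appears in Section~\ref{FutDirec} (\emph{Future directions}) precisely because it is open. There is therefore no ``paper's own proof'' to compare your proposal against. What the paper does say is that the restatement in terms of $(d^2-4)(S_{k-1}(d))^2$ ``suggests that Jones--Wenzl idempotents may be used to prove Conjecture~\ref{conjecturer} and Conjecture~\ref{Qi}.'' Your proposal is exactly an elaboration of that suggestion, so in that limited sense you and the authors are thinking along the same lines.

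That said, your write-up is a research plan, not a proof, and you are candid about this: the ``hard part'' --- the crosscap recoupling identity describing how a Jones--Wenzl-projected $k$-cable behaves when pushed through the orientation-reversing crosscap --- is the entire content of the conjecture, and you only say you ``would attack it by induction on $k$'' via the Wenzl recursion. Nothing in the paper (or, to my knowledge, in the literature) establishes such a crosscap eigenvalue lemma, and the parity split between $-2xy$ and $-2(2-z)$ is not derived anywhere; it is exactly the phenomenon that needs explaining. Likewise, the claim that the multiplicities ``accumulate to the exponents $\binom{2n}{n-k}$'' in the non-orientable setting is asserted rather than proved; the Type~$A$/$B$ arguments you allude to rely on orientability in ways that do not transfer automatically. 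So while your outline is a reasonable statement of intent that matches the authors' own speculation, it does not close any of the gaps that make Conjecture~\ref{Qi} a conjecture.
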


\section*{Acknowledgements}
DI was supported by the Australian Research Council, Grant DP240102350. GMV acknowledges the support of the National Science Foundation through Grant DMS-2212736 and thanks Monash University, School of Mathematics, for their invitation to visit, where the final part of this work was produced.

\end{document}